\newtheorem{theorem}{Theorem}
\newtheorem{lemma}{Lemma}
\theoremstyle{definition}
\newtheorem{remark}{Remark}
\newcommand{\C}{\mathbb{C}}
\renewcommand{\P}{\mathbb{P}}
\begin{document}

\title{Approximation and interpolation of regular maps \\ from affine varieties to algebraic manifolds}

\author{Finnur L\'arusson}

\address{Finnur L\'arusson, School of Mathematical Sciences, University of Adelaide, Adelaide SA 5005, Australia}
\email{finnur.larusson@adelaide.edu.au}

\author{Tuyen Trung Truong}
\address{Tuyen Trung Truong, School of Mathematical Sciences, University of Adelaide, Adelaide SA 5005, Australia}
\curraddr{Department of Mathematics, University of Oslo, Blindern, 0316 Oslo, Norway}
\email{tuyentt@math.uio.no}

\thanks{The authors were supported by Australian Research Council grant DP150103442.}

\subjclass[2010]{Primary 14R10.  Secondary 14M20, 14M25, 32E10, 32Q28}

\date{First version 1 June 2017.  This version 4 July 2017.  A few minor edits 1 May 2018}

\keywords{Oka principle, Oka theory, affine variety, algebraic manifold, Stein space, Oka manifold, regular map, holomorphic map, approximation, interpolation}

\begin{abstract} 
We consider the analogue for regular maps from affine varieties to suitable algebraic manifolds of Oka theory for holomorphic maps from Stein spaces to suitable complex manifolds.  The goal is to understand when the obstructions to approximation or interpolation are purely topological.  We propose a definition of an algebraic Oka property, which is stronger than the analytic Oka property.  We review the known examples of algebraic manifolds satisfying the algebraic Oka property and add a new class of examples: smooth nondegenerate toric varieties.  On the other hand, we show that the algebraic analogues of three of the central properties of analytic Oka theory fail for all compact manifolds and manifolds with a rational curve; in particular, for projective manifolds.
\end{abstract}

\maketitle

\section{Introduction and Results} 
\label{sec:intro}

\noindent
The past 15 years or so have seen the development of a rich theory of approximation and interpolation of holomorphic maps from Stein spaces to complex manifolds that are \lq\lq big\rq\rq\ in the sense that the complex plane is big and the disc is small.  The prototypical examples of \lq\lq big\rq\rq\ complex manifolds are complex Lie groups.  By the 1960s, good approximation and interpolation theorems for them and their homogeneous spaces had been proved by Grauert, Cartan, and others.  In a seminal paper of 1989 \cite{Gromov1989}, Gromov showed how to extend such theorems to the larger class of elliptic manifolds, using his linearisation method of dominating sprays.  Since 2000, the theory has grown into a subfield of holomorphic geometry in its own right, the foremost contributor being Forstneri\v c.  Oka manifolds have emerged as the natural targets of maps from Stein spaces.  They are defined by close to 20 nontrivially equivalent properties involving approximation or interpolation or both.  (The monograph \cite{Forstneric2017} is a comprehensive reference on Oka theory; see also the survey \cite{FL2011}.)

Among the properties of a complex manifold $Y$ investigated in Oka theory are the following, where $X$ denotes an arbitrary reduced Stein space.
\begin{itemize}
\item  \textit{The approximation property} (AP):  Every continuous map $X\to Y$ that is holomorphic on a holomorphically convex compact subset $K$ of $X$ can be uniformly approximated on $K$ by holomorphic maps $X\to Y$.
\item  \textit{The interpolation property} (IP):  A holomorphic map from a subvariety of $X$ to $Y$ has a holomorphic extension $X\to Y$ if it has a continuous extension.
\item  \textit{The basic Oka property} (BOP):  Every continuous map $X\to Y$ is homotopic to a holomorphic map.
\item  \textit{The homotopy Runge property} (HRP):  For every holomorphic map $f_0:X\to Y$, a holomorphically convex compact subset $K$ of $X$ with a neighbourhood $U$, and a homotopy of holomorphic maps $f_t:U\to Y$, $t\in [0,1]$, there is a holomorphic map $F:X\times\C\to Y$ with $F(\cdot,0)=f_0$ and $F(\cdot, t)$ as close to $f_t$ as desired, uniformly on $K$.
\item  \textit{Subellipticity} (SEll):  $Y$ admits a finite dominating family of holomorphic sprays (for more details, see \cite[Definition 5.6.13]{Forstneric2017}).
\item  $\textrm{Ell}_1$:\footnote{This is Gromov's term.  The property could also be called \textit{relative ellipticity}.}  For every holomorphic map $f:X\to Y$, there is a holomorphic map $F:X\times\C^m\to Y$ for some $m\geq 1$, such that $F(\cdot,0)=f$ and $F(x,\cdot):\C^m\to Y$ is a submersion at $0$ for every $x\in X$.
\end{itemize}

Deep theorems provide the following implications.

\[ \xymatrix{
  & & \textrm{Ell}_1 \\
 \textrm{SEll} \ar[r] & \big(\textrm{AP}\leftrightarrow\textrm{IP}\leftrightarrow\textrm{HRP}\big) \ar[ur] \ar[dr] & \\
  &  & \textrm{BOP}
} \]
AP, IP, and HRP are among the many equivalent formulations of the Oka property.  Subellipticity is a useful geometric sufficient condition for the Oka property to hold.  $\textrm{Ell}_1$ is primarily of interest as a stepping stone on the way to transversality theorems for holomorphic maps into Oka manifolds (see \cite[Section 8.8]{Forstneric2017}).  The converses of the implications $\rightarrow$ and $\nearrow$ are true when $Y$ is Stein, but are open in general.  The converse of $\searrow$ obviously fails when $Y$ is the disc, but no noncontractible counterexamples are known.  

The goal of this paper is to investigate the analogues of these properties and their relationships in the algebraic category.  Each of the six properties has an algebraic version for an algebraic manifold\footnote{An algebraic manifold is a connected smooth algebraic variety over $\C$, by definition quasi-compact in the Zariski topology.  We take a subvariety to be closed and not necessarily irreducible.} $Y$, with $X$ replaced by an arbitrary affine variety, holomorphic maps from $X$, $X\times\C$, $X\times\C^m$, or subvarieties of $X$ by regular maps (that is, morphisms), and holomorphic sprays on $Y$ by algebraic sprays.  The algebraic analogue of a holomorphic property P will be called aP.

A very different picture emerges in the algebraic case.

\begin{theorem}   \label{t:first}
For algebraic manifolds, aSEll, $\textrm{aEll}_1$, and aHRP are equivalent.
\end{theorem}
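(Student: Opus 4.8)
The plan is to prove the cyclic chain aSEll $\Rightarrow \textrm{aEll}_1 \Rightarrow$ aHRP $\Rightarrow$ aSEll, with the last implication carrying essentially all of the difficulty. I would begin with the concrete, structural implication aSEll $\Rightarrow \textrm{aEll}_1$. Given a regular map $f\colon X\to Y$ from an affine variety and a finite dominating family of algebraic sprays $s_i\colon E_i\to Y$, pull each $E_i$ back along $f$ to a vector bundle $f^*E_i\to X$. Because $X$ is affine, each $f^*E_i$ is a direct summand of a trivial bundle $X\times\C^{n_i}$, so precomposing the pulled-back spray with the projection $X\times\C^{n_i}\to f^*E_i$ replaces it by a spray on a trivial bundle that still restricts to $f$ on the zero section. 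Composing these sprays --- feeding the output of one into the base point of the next --- yields a single regular map $F\colon X\times\C^m\to Y$, $m=\sum_i n_i$, with $F(\cdot,0)=f$; and since the original family dominates, the fibre derivative of $F$ at $0$ maps onto $T_{f(x)}Y$ for every $x$, which is exactly $\textrm{aEll}_1$.

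For $\textrm{aEll}_1\Rightarrow$ aHRP I would apply $\textrm{aEll}_1$ to the initial map $f_0$ to obtain a dominating spray $F\colon X\times\C^m\to Y$ of $f_0$. Since $F(x,\cdot)$ is a submersion at $0$ for every $x$, the spray parametrises a neighbourhood of $f_0(x)$ by the fibre $\C^m$, so the prescribed homotopy can be absorbed into the fibre variable: using surjectivity of the fibre derivative, one solves over the affine base $X$ for a regular map into $\C^m$ that realises the homotopy-Runge data while fixing the value at the base parameter, and composing with $F$ delivers the required regular map $X\times\C\to Y$.

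The main obstacle is the return implication aHRP $\Rightarrow$ aSEll, because aSEll demands algebraic sprays defined \emph{globally} on the non-affine manifold $Y$, whereas aHRP and $\textrm{aEll}_1$ produce maps only out of affine varieties. Here I would argue by localisation and rescaling. Cover $Y$ by finitely many affine Zariski-open sets $U_i$ whose complements $Z_i=Y\setminus U_i$ are hypersurfaces, and use aHRP, applied to the inclusions $U_i\hookrightarrow Y$ with homotopies whose initial velocities run through the coordinate directions, to manufacture over each affine $U_i$ a spray $F_i\colon U_i\times\C^{m_i}\to Y$ dominating at every point of $U_i$. Such an $F_i$ need not extend across $Z_i$, and this is the delicate point: choosing a regular function $h_i$ on $Y$ that vanishes on $Z_i$ and replacing $F_i(y,v)$ by $\tilde F_i(y,v)=F_i\bigl(y,h_i(y)^{k}v\bigr)$, for $k$ large the rescaling annihilates the poles of the spray derivative and of the higher-order terms along $Z_i$, so that $\tilde F_i$ extends to a global algebraic spray on the trivial bundle $Y\times\C^{m_i}$ --- trivial along $Z_i$ but still dominating on $\{h_i\neq 0\}=U_i$. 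As the $U_i$ cover $Y$, the family $\{\tilde F_i\}$ then dominates $T_yY$ at every point, giving aSEll. The technical heart is to verify that $\tilde F_i$ is a genuine regular morphism into $Y$ rather than merely a rational or bounded map; smoothness of $Y\times\C^{m_i}$, the codimension-one nature of $Z_i\times\C^{m_i}$, and the explicit rescaling should combine into a localisation principle for algebraic subellipticity in the spirit of Forstneri\v{c}.
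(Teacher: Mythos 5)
Your first implication, aSEll $\Rightarrow \textrm{aEll}_1$, is fine: it is Forstneri\v c's \cite[Proposition 4.6]{Forstneric2006}, and your sketch (pull the sprays back, trivialise over the affine source, compose) is essentially his argument. The fatal gap is the middle step, $\textrm{aEll}_1 \Rightarrow$ aHRP. A dominating spray $F\colon X\times\C^m\to Y$ over $f_0$ controls, via the implicit function theorem, only those maps that stay uniformly close to $f_0$ near $K$; the homotopy $(f_t)$ in aHRP is arbitrary, and for $t$ away from $0$ it will in general leave the range in which the fibre derivative of $F$ lets you solve $F(x,\xi(x))=f_t(x)$. Nor can you iterate your way along the homotopy: the intermediate maps $f_t$ are merely holomorphic on a neighbourhood of $K$, not regular maps from an affine variety, so $\textrm{aEll}_1$ provides no spray around them; and even if you manufacture global regular maps step by step, concatenating the resulting algebraic homotopies in $t$ is not an algebraic operation, so the pieces do not assemble into a single regular $F\colon X\times\C\to Y$. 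A sanity check exposes the problem: nothing in your step 2 genuinely uses algebraicity, so the same argument would prove the analytic implication $\textrm{Ell}_1\Rightarrow$ HRP, which the paper's introduction records as an open problem. The only known route from $\textrm{aEll}_1$ to aHRP is indirect: $\textrm{aEll}_1$ yields pointwise local sprays (the paper's \lq\lq weak formulation\rq\rq), Gromov's localisation principle upgrades this to aSEll, and then Forstneri\v c's theorem \cite[Theorem 3.1]{Forstneric2006} --- a substantial result resting on the full analytic Oka machinery --- gives aHRP. That theorem is exactly what your proposal implicitly assumes can be done in a few lines.

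Your closing implication, aHRP $\Rightarrow$ aSEll, is the right idea --- it is Gromov's localisation principle (\cite[3.5.B]{Gromov1989}, \cite[Proposition 6.4.2]{Forstneric2011}), which the paper cites rather than reproves --- but your sketch of it has a genuine defect: you rescale by \lq\lq a regular function $h_i$ on $Y$ vanishing on $Z_i$\rq\rq. On a general algebraic manifold, in particular on any compact one (the case of main interest), nonconstant global regular functions do not exist; the correct rescaling uses the canonical section of the line bundle $\mathscr O(kZ_i)$ attached to the divisor $Z_i$ and twists the spray bundle accordingly, which is precisely why subellipticity is formulated with sprays on possibly nontrivial vector bundles rather than only on $Y\times\C^{m_i}$. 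A smaller inaccuracy: aHRP approximates only on $K$, so it controls $t$-derivatives only at points of $K$; you obtain sprays dominating at a prescribed point, not at every point of $U_i$. The pointwise statement suffices, since dominance of a finite family of global sprays is a Zariski-open condition and $Y$ is quasi-compact --- this is exactly the paper's weak formulation. The defects in this step are repairable by invoking the localisation principle, as the paper does; the gap in step 2 is not repairable within your outline.
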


\begin{theorem}   \label{t:second}
Let $Y$ be an algebraic manifold.\footnote{We take $Y$ to have positive dimension, that is, we exclude the point.}

{\rm (a)}  If $Y$ contains a rational curve, that is, there is a nonconstant regular map from $\P_1$ to $Y$, then $Y$ does not satisfy aAP, aIP, or aBOP.

{\rm (b)}  If $Y$ is compact, then $Y$ does not satisfy aAP, aIP, or aBOP.
\end{theorem}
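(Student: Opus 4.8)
The plan rests on one structural asymmetry. A regular map $g\colon X\to Y$ pulls a line bundle $M$ on $Y$ back to the \emph{algebraic} line bundle $g^*M$ on the affine variety $X$, so the class $g^*c_1(M)\in H^2(X;\mathbb{Z})$ necessarily lies in the image of $c_1\colon\operatorname{Pic}(X)\to H^2(X;\mathbb{Z})$; moreover this image is a homotopy invariant of $g$. A merely continuous map, or one that is only holomorphic near a compact set, is under no such constraint. So if I can feed into $Y$ a map whose pullback class is \emph{transcendental} (not in the image of $\operatorname{Pic}$), that map cannot be homotopic to, nor approximable by, regular maps, and the obstruction is non-topological --- which is precisely what the theorem asserts.

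For part (a), fix a nonconstant $\phi\colon\P_1\to Y$ and a line bundle $M$ on $Y$ with $d:=\deg\phi^*M\neq 0$, so $\phi^*c_1(M)=d\,[\omega]$ for a generator $[\omega]$ of $H^2(\P_1;\mathbb{Z})$. Next I would produce an affine manifold $X$ whose second cohomology is strictly larger than the image of $\operatorname{Pic}(X)$, in a way that survives multiplication by $d$: take a smooth projective surface $S$ with Picard number below its second Betti number (a very general $K3$ or abelian surface), an ample divisor $D$, and set $X=S\setminus D$, which is affine; here $\operatorname{Pic}(X)$ is a quotient of $\operatorname{Pic}(S)$, while the transcendental lattice of $S$ persists in $H^2(X;\mathbb{Z})$ as a torsion-free subgroup disjoint from the algebraic part. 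I then realize a transcendental class $c$ as $u^*[\omega]$ for a continuous $u\colon X\to\P_1$ (Pontryagin--Thom, via a topological line bundle with a transverse section), and put $f=\phi\circ u\colon X\to Y$. Any regular $g$ homotopic to $f$ would satisfy $g^*c_1(M)=f^*c_1(M)=d\,u^*[\omega]=dc$, which is algebraic, whereas $dc$ is transcendental: contradiction. This kills aBOP. For aAP I would arrange $X$ to deformation retract onto a holomorphically convex compact $K$ and build the same class by a map \emph{holomorphic} near $K$ --- on the Stein neighbourhood the Oka--Grauert principle gives $\operatorname{Pic}_{\mathrm{hol}}\twoheadrightarrow H^2$, so the transcendental class is holomorphically attainable though not algebraically so --- then extend continuously over $X$; since $K\hookrightarrow X$ is a homotopy equivalence, a regular map approximating $f$ on $K$ would be homotopic to $f$, and the same contradiction recurs.

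The interpolation property needs more, because a regular $g\colon A\to Y$ already has \emph{algebraic} $g^*c_1(M)$ on $A$, so no contradiction can come from $A$ alone. Here I would seek an affine $X$ with a subvariety $A$ such that $A\hookrightarrow X$ induces an isomorphism on $H^2$ (making continuous extension unobstructed) while a transcendental class on $X$ restricts to an \emph{algebraic} class on $A$; letting $g$ realize the latter through $\phi$, a regular extension to $X$ would force that restricted class to lift to the algebraic part of $H^2(X;\mathbb{Z})$, which it does not. For part (b), I would reduce to these mechanisms: if the compact $Y$ contains a rational curve we are in case (a); otherwise, by properness every regular map $\C\to Y$ extends across $\infty$, and having no rational curve to be, is constant --- so aAP fails by approximating a nonconstant holomorphic disc and aIP fails by sending two points of $\C$ to distinct values --- while every regular map $\C^*\to Y$ extends over the puncture and is therefore null-homotopic, so any homotopically nontrivial continuous map $\C^*\to Y$ defeats aBOP when $\pi_1(Y)\neq 0$, the residual simply connected cases again yielding to the transcendental-$H^2$ argument.

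The genuine difficulty is geometric rather than homotopical: one must construct, uniformly in the given $Y$, affine varieties whose degree-two cohomology provably exceeds the image of $\operatorname{Pic}$, and --- for interpolation --- a subvariety on which an ambient transcendental class becomes algebraic while the inclusion remains a $2$-cohomology isomorphism. Equally delicate is the realizability step: guaranteeing that the wanted transcendental class is actually $f^*c_1(M)$ for a continuous $f$ that is holomorphic near $K$, which is where the interplay between Pontryagin--Thom, Oka--Grauert, and the degree-$d$ rational curve has to be made precise. I expect this construction, not the cohomological principle, to be the main obstacle.
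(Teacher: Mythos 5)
Your part~(a) mechanism for aBOP and aAP (realize a class in $H^2$ of an affine source that is not in the image of $\operatorname{Pic}$, push it through $\P_1$ into $Y$, and note that regular maps can only produce algebraic classes) is sound, and it is a genuinely different route from the paper's, which instead uses $\C^*\times\C^*$ --- trivial algebraic but infinite holomorphic Picard group --- together with Grauert's Oka principle. However, your argument only works when $Y$ is quasi-projective, because you simply \emph{fix} a line bundle $M$ on $Y$ with $\deg\phi^*M\neq 0$. For an arbitrary algebraic manifold (part~(a)), or a compact but non-projective $Y$ (part~(b)), the existence of such an $M$ is exactly the hard technical point: there is no ample bundle to reach for, and a priori every line bundle on $Y$ could have degree zero on the given curve. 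The paper spends its Lemma~\ref{l:lemma-1} and the compactification step (Nagata plus Hironaka, then a strong Chow lemma and a Moishezon-type principalization argument) manufacturing a divisor on a smooth compactification $\bar Y$ that meets the rational curve positively; without some such construction your proof does not cover the stated generality.

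Two further gaps are more structural. For aIP you correctly diagnose that your $H^2$-based obstruction cannot work (a regular map on the subvariety already pulls classes back algebraically), but you then only describe the example you ``would seek'' without producing it --- and in fact no example of the kind you describe can live in topological cohomology at all. The paper's solution abandons $H^2$ and works in the Picard group itself: take the subvariety to be a smooth irrational affine curve $S\subset\C^2$, where $H^2(S)=H^2(\C^2)=0$; the algebraic $\operatorname{Pic}(S)$ has nontorsion elements (the Jacobian survives algebraically though every line bundle on $S$ is holomorphically and topologically trivial), while $\operatorname{Pic}(\C^2)=0$ by Quillen--Suslin, so the map $S\to\P_1\to Y$ pulling an ample class back to a nontorsion element extends continuously but not regularly. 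This missing idea is the heart of the aIP statement. Finally, in part~(b) your aBOP argument covers only $\pi_1(Y)\neq 0$: for compact, simply connected $Y$ without rational curves you defer to the ``transcendental-$H^2$ argument'', but that argument \emph{requires} a rational curve $\phi$, which is precisely what is unavailable, so this case is circular. The paper instead takes as source the complex $k$-sphere $\Sigma^k$ (an affine variety that retracts onto $S^k$): compactness forces $\pi_k(Y)\neq 0$ for some $k$, aBOP then produces a nonconstant regular map $\Sigma^k\to Y$, and since $\Sigma^k$ is swept by regular images of $\C^*$, one obtains a nonconstant regular map $\C^*\to Y$, which extends to $\P_1$ by properness --- contradicting the absence of rational curves.
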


Theorem \ref{t:first} suggests that \lq\lq algebraic Oka theory\rq\rq\ should focus on the equivalent properties aSEll, $\textrm{aEll}_1$, and aHRP.  It is tempting to introduce the term \textit{the algebraic Oka property} (aOka) for them.

On the other hand, the properties aAP, aIP, and aBOP are of no interest for compact manifolds and manifolds with a rational curve; in particular for projective manifolds.  For affine manifolds $Y$, the authors' understanding of these properties is limited.  The affine spaces $\C^n$, $n\geq 1$, satisfy the six algebraic properties.  More generally, when $Y$ is contractible, aBOP is obviously true, and aAP holds if, and aIP holds if and only if, $Y$ is a regular retract of some affine space.  We would not be surprised if the three properties turned out to fail for all noncontractible affine manifolds.  We provide some examples below.

The properties aAP, aIP, and aBOP make sense, as defined, for singular varieties.  The proof of Theorem \ref{t:second} is easily generalised to a possibly singular algebraic variety $Y$ that embeds as a subvariety in a smooth variety.  It is well known that not all singular varieties do.  In particular, Theorem \ref{t:second} holds for a projective variety $Y$.  Little is known about the Oka theory of singular targets.  The first paper on this topic is \cite{LL2016}.  The results there show that analytic Oka theory changes quite dramatically when we move from smooth targets to singular targets.

Our third theorem provides a new class of examples of aOka manifolds.

\begin{theorem}   \label{t:toric}
Every smooth nondegenerate toric variety is locally flexible and hence algebraically Oka.
\end{theorem}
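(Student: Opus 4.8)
The plan is to prove the two assertions in turn: first that a smooth nondegenerate toric variety $Y$ is locally flexible, and then to deduce the algebraic Oka property from local flexibility together with Theorem \ref{t:first}. For the second step I would appeal to the general fact that the flow of a complete algebraic vector field is an algebraic spray over the trivial line bundle, so that a Zariski-open cover of $Y$ by flexible pieces yields, over each piece, a finite dominating family of sprays; combined with the localisation principle for algebraic subellipticity this gives aSEll, hence aHRP $=$ aOka by Theorem \ref{t:first}. Thus the genuine content is the local flexibility of $Y$, and this is where I would concentrate the work.

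For the toric geometry I would fix a fan $\Sigma$ in $N_{\mathbb{R}}$ with $Y = Y_\Sigma$, write $\Sigma(1)$ for the rays with primitive generators $v_\rho$, and recall Demazure's description of the one-parameter unipotent subgroups of $\mathrm{Aut}(Y)$: to each Demazure root $e\in M$ — an integral linear form that takes the value $-1$ on a single distinguished ray $\rho_e$ and is nonnegative on all the others — there is associated a $\mathbb{G}_a$-action whose velocity field on the open orbit is $\chi^e$ times the invariant field in the direction $v_{\rho_e}$. The role of nondegeneracy is exactly to guarantee that the rays, and hence the available root directions $v_{\rho_e}$, span $N_{\mathbb{R}}$; equivalently, $Y$ has no torus factor $\mathbb{C}^*$. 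This is essential, because $\mathbb{C}^*$ carries no nonconstant $\mathbb{G}_a$-action and is not even aSEll, so a torus factor would defeat any spanning.

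The core construction is a Zariski-open cover of $Y$ by smooth nondegenerate toric subvarieties that are flexible. Smoothness gives the affine charts $U_\sigma\cong\mathbb{C}^k\times(\mathbb{C}^*)^{n-k}$, one for each maximal cone $\sigma$ of dimension $k$. When $\sigma$ is full-dimensional, $U_\sigma\cong\mathbb{C}^n$ is flexible, and if every maximal cone is full-dimensional — in particular when $Y$ is complete — these charts already form a cover by flexible opens and we are done. (The local version is genuinely needed: a Hirzebruch surface with a negative section is not itself flexible, since that section is preserved by $\mathrm{Aut}^0$, yet it is covered by affine planes.) In general a chart carries a torus factor and is not flexible, so I would enlarge it: around a point of the orbit $O_\sigma$ I would build an open toric subvariety whose fan is nondegenerate — its rays spanning $N_{\mathbb{R}}$, using the ambient nondegeneracy to adjoin enough rays from $\Sigma$ — and which is quasi-affine. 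For such a piece one produces, at each point, complete $\mathbb{G}_a$-actions spanning the tangent space: the directions transverse to the boundary strata come from Demazure root subgroups with the appropriate distinguished rays, while the directions tangent to the small orbits must be supplied by further $\mathbb{G}_a$-actions that do not normalise the big torus — the model being the transitive $\mathrm{SL}_2$-action on $\mathbb{C}^2\setminus\{0\}$, whose unipotent subgroups recover precisely the orbit-tangential directions that the torus-normalising root subgroups miss.

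The main obstacle, as this last example already signals, is the spanning of $T_yY$ at points lying on the lower-dimensional torus orbits, where most root vector fields vanish. Two things must be checked there, and they are the delicate heart of the argument: that nondegeneracy forces the existence of enough Demazure roots with the required distinguished rays, yielding all the transverse directions; and that the orbit-tangential directions are realised by $\mathbb{G}_a$-actions that are complete on the chosen open piece — which, in Cox coordinates, amounts to verifying that the relevant additive actions preserve the irrelevant locus. Assembling these actions into an actual dominating family over each piece of the cover, and confirming that the pieces are genuinely flexible, is the step I expect to demand the most care; the passage from the open orbit, where nondegeneracy trivially yields spanning, to the boundary is where the hypotheses are truly used.
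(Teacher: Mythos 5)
Your overall skeleton coincides with the paper's: cover $Y$ by the charts of the maximal cones, observe that full-dimensional cones give copies of $\C^n$, enlarge the chart $\C^k\times(\C^*)^{n-k}$ of a cone $C$ of dimension $k<n$ to a quasi-affine toric Zariski-open subset with nondegenerate fan, and conclude via locally flexible $\Rightarrow$ aOka (the paper cites \cite[Corollary 3.2]{KKT} for this last step; your spray-plus-localisation sketch is the standard argument behind it). The genuine gap is that you never establish flexibility of the enlarged pieces: you yourself flag the spanning of the tangent space at points of the small orbits, and the completeness of the required $\mathbb{G}_a$-actions on the piece, as the \emph{delicate heart} that remains to be checked. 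This is not a routine verification. It is precisely the point at which the paper does not argue directly but instead invokes two substantial external theorems: the smooth locus of a nondegenerate affine toric variety is flexible \cite[Theorem 0.2]{AKZ2012}, and the complement of a subvariety of codimension at least $2$ in a flexible quasi-affine manifold is flexible \cite[Theorem 0.1]{FKZ2016}. Indeed, immediately after its proof the paper remarks that if one could prove directly that such a piece $V$ is flexible, or even just aOka, the two theorems would be unnecessary --- so your proposed direct construction via Demazure roots and additional non-torus-normalising $\mathbb{G}_a$-actions would in effect have to reprove those results, and your sketch gives no indication of how. The completeness issue is especially serious: a complete vector field on $Y$, or on an ambient affine variety, restricts to a complete field on $V$ only if its flow preserves $V$, which generically fails; manufacturing complete fields on $V$ itself that span at boundary points is exactly the content of the Gromov--Winkelmann type theorem of \cite{FKZ2016}.

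What makes those citations applicable --- and what is missing from your construction even as a setup --- is a precise choice in the enlargement: the rays adjoined to $C$ must be chosen so that their generators, together with the minimal generators of $C$, form a \emph{basis} of $N_\mathbb{R}$, not merely a spanning set. Then the cone $C'$ spanned by the resulting subfan $F'$ is pointed and simplicial, every cone of $F'$ is a face of $C'$, and by the orbit-cone correspondence the piece $V$ sits inside the (possibly singular) nondegenerate affine toric variety $Z$ defined by $C'$ as the complement of a union of orbits of codimension at least $2$. This one observation simultaneously proves that $V$ is quasi-affine (which you assert as a desideratum but never verify; with an arbitrary spanning set of adjoined rays the cone they generate need not be strictly convex and quasi-affineness can fail) and reduces flexibility of $V$ to the two quoted theorems: $V$ lies in the smooth locus of $Z$, which is flexible by \cite{AKZ2012}, and its complement there has codimension at least $2$, so $V$ is flexible by \cite{FKZ2016}. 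Without this reduction, or a genuinely new direct proof of flexibility of the pieces, your argument only covers the case in which every maximal cone is full-dimensional (for instance complete $Y$), where the charts are already copies of $\C^n$.
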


It is known that every smooth toric variety is Oka (\cite{Larusson2011}, \cite[Theorem 2.17]{Forstneric2013}).

\begin{remark}   \label{r:forstneric}
We rely on Forstneri\v c's work in \cite{Forstneric2006} (see also \cite[Sections 6.15 and 8.8]{Forstneric2017}).  He proved that aSEll implies both aHRP \cite[Theorem 3.1]{Forstneric2006} and $\textrm{aEll}_1$ \cite[Proposition 4.6]{Forstneric2006}.

Forstneri\v c showed that if a holomorphic map from an affine variety $X$ to an aOka manifold is homotopic to a regular map, then it is approximable by regular maps.  The converse is easily proved, because two continuous maps from $X$ that are sufficiently close on a sufficiently large compact subset of $X$ are homotopic.  (Here we need to know that there is a compact subset of $X$ that is a strong deformation retract of $X$ \cite[Theorem 1.1]{HM1997}.)  For the same reason (noting also that aAP trivially implies AP, which, as already mentioned, nontrivially implies BOP), aAP implies aBOP.

Forstneri\v c gave two aOka counterexamples to aBOP, and therefore to aAP, in \cite[Examples 6.15.7 and 6.15.8]{Forstneric2017}.  One is the complex projective space $\P_n$ for $n\geq 3$.  The other is the complex $n$-sphere $\Sigma^n=\{(z_0,\ldots,z_n)\in\C^{n+1}:z_0^2+\cdots+z_n^2=1\}$ for even $n\geq 2$.  It is a homogeneous space of the connected linear algebraic group $\textrm{SO}(n+1, \C)$, which has no nontrivial characters, and therefore flexible \cite[Proposition 5.4]{AFKKZ2013a}.  

An affine manifold is flexible if its tangent bundle is generated by complete regular vector fields with regular flows.  Equivalently, the subgroup of the algebraic automorphism group generated by subgroups isomorphic to $(\C, +)$ acts infinitely transitively (\cite{AFKKZ2013a}, \cite{AKZ2012}).  The notion of flexibility has been extended to quasi-affine manifolds \cite{FKZ2016}.  An algebraic manifold is locally flexible if it is covered by quasi-affine Zariski-open subsets that are flexible; it is then aOka \cite[Corollary 3.2]{KKT} and has \lq\lq many\rq\rq\ birational automorphisms.
\end{remark}

\begin{remark}
(a)  If an algebraic manifold is aOka, then it is Oka as a complex manifold (because algebraic subellipticity obviously implies subellipticity, which in turn implies the Oka property).

(b)  It is easily seen that:
\begin{itemize}
\item  the product of two aOka manifolds is aOka,
\item  a regular retract of an aOka manifold is aOka,
\item  a finite unbranched covering space of an aOka manifold is aOka.
\end{itemize}
By Gromov's localisation principle for algebraic subellipticity (\cite[\S 3.5.B]{Gromov1989}; see also \cite[Proposition 6.4.2]{Forstneric2017}), the algebraic Oka property is Zariski-local.  (Since the properties aAP, aIP, and aBOP fail for $\P_1$ but hold for $\C$, they are not Zariski-local.)

(c)  A smooth compact algebraic surface $Y$ is aOka if and only if it is rational.  Indeed, if $Y$ is aOka, then $Y$ is unirational and hence rational.  Conversely, if $Y$ is rational, then $Y$ is covered by Zariski-open subsets isomorphic to $\C^2$, so $Y$ is aOka.  (By \cite[Example 5.3]{AFKKZ2013a}, a flexible affine manifold need not be rational or even stably rational.)

(d)  Forstneri\v c showed that every compact aOka manifold of dimension $n$ is the image of a regular map from $\C^n$ (his result \cite[Theorem 1.6]{Forstneric2016} in fact says more).  His argument can be easily extended to a proof that if $Y$ is an $n$-dimensional aOka manifold and $K$ is a compact subset of $Y$, then there is a regular map from $\C^n$ to $Y$ whose image contains $K$.  It follows that every finite subset of $Y$ lies in a regular image of $\C$.  Equivalently, the values of regular maps from affine varieties to $Y$ can be prescribed at finitely many points.

Suppose that $Y$ is an aOka manifold of dimension $n$.  By $\textrm{aEll}_1$, $Y$ is strongly algebraically dominable, meaning that for every $y\in Y$, there is a regular map $g:\C^n \to Y$ with $0\mapsto y$ that is nondegenerate at $0$.  The image of $g$ is constructible and has nonempty interior in the Hausdorff topology, so it contains a nonempty Zariski-open set.  Being quasi-compact in the Zariski topology, $Y$ is covered by the images of finitely many such maps.  If $Y$ is noncompact, we do not know whether $Y$ is the image of a single regular map from $\C^n$.  In particular, we do not know whether $\C^2\setminus\{0\}$ is a regular image of $\C^2$.

(e)  As far as the authors are aware, no aOka manifold is known not to be locally flexible.  Quite a few different kinds of examples of locally flexible manifolds may be found in the literature.  Some are flexible (for a list of examples, see \cite[Section 3]{AFKKZ2013b}) and some are Zariski-locally isomorphic to affine space (also said to be of Class $\mathscr A_0$ or A-covered; for a list of examples, see \cite[Section 4]{APS2014}).  Our Theorem \ref{t:toric} provides a new class of examples of aOka manifolds: smooth nondegenerate toric varieties.

It does not seem reasonable to conjecture that all aOka manifolds are locally flexible.  In the light of our current knowledge, the two classes appear rather different.  The aOka property is preserved by regular retracts; flexibility is not (see Remark \ref{r:affine-counterexamples}(b) below), but for local flexibility it is an open question.  Arbitrary blow-ups of, say, $\C^3$ are aOka \cite[Theorem 1]{LT2017}, but are not known to be locally flexible.  Local flexibility is preserved by removing subvarieties of codimension at least $2$ \cite[Theorem 1.1]{FKZ2016} and yields many birational automorphisms; both are unknown for the aOka property (although aOka manifolds do have many dominant rational self-maps).

(f)  We do not know whether the algebraic Oka property is a birational invariant, but we may be close: the blow-up of a locally flexible (or merely locally stably flexible) algebraic manifold along any algebraic submanifold (not necessarily connected or of pure dimension) is aOka \cite[Theorem 0.3]{KKT}.  Also, strong algebraic dominability, a property of algebraic manifolds that is implied by the algebraic Oka property, is preserved by blowing up along any algebraic submanifold \cite[Theorem 9]{LT2017}.  (Although $\C^2$ satisfies aAP, aIP, and aBOP, $\C^2$ blown up at a point satisfies none of them by Theorem \ref{t:second}(a).)

(g)  If a projective manifold is aOka, then it is unirational.  By Ishkovskikh and Manin's solution of the L\"uroth problem \cite{IM1971}, every smooth quartic in $\P_4$ has a finite group of birational automorphisms, so it cannot be rational or locally flexible, whereas Segre showed that some quartics are unirational.  It follows that among projective manifolds (in fact among smooth quartics in $\P_4$), either there are aOka manifolds that are not locally flexible, or there are unirational manifolds that are not aOka (or both).
\end{remark}

\begin{remark}  \label{r:affine-counterexamples}
(a)  Let $Y$ be a projective variety.  By the Jouanolou trick (first used in Oka theory in \cite{Larusson2005}), $Y$ carries an affine bundle whose total space $A$ is affine.  For $Y=\P_m$, we take $A$ to be the complement $A_m$ in $\P_m\times\P_m$ of the hypersurface defined by the equation $z_0w_0+\cdots+z_mw_m=0$, with the projection $A_m\to\P_m$ onto the first component.  In general, we embed $Y$ into $\P_m$ for some $m$ and let $A$ be the pullback of $A_m$ by the inclusion $Y\hookrightarrow\P_m$.  We sometimes call $A$ an affine model or a Stein model for $Y$.  Every holomorphic map from a reduced Stein space to $Y$ factors holomorphically (not necessarily uniquely) through $A$.  We claim that $A$ fails to satisfy aIP.

By the proof of Theorem \ref{t:second}, the failure of $Y$ to satisfy aIP is demonstrated by the sources $\{0,1\}\hookrightarrow \C$ or by the sources $S\hookrightarrow \C^2$, where $S$ is a smooth irrational curve.  In the former case, we easily deduce that $A$ fails to satisfy aIP.  In the latter case, there is a nullhomotopic regular map $f:S\to Y$ that does not factor regularly through $\C^2$ although it does continuously.  We claim that $f$ has a regular lifting to $A$.  The lifting is also nullhomotopic and does not factor regularly through $\C^2$ either, so $A$ does not satisfy aIP.  

To prove that $f$ has a regular lifting to $A$, it suffices to take $Y=\P_m$ and $A=A_m$.  We need to show that if $f:S\to\P_m$ is a regular map from a smooth affine curve $S$, then there is a regular map $g:S\to\P_m$ such that $(f,g)$ avoids the hypersurface in $\P_m\times\P_m$ defined by the equation $z_0w_0+\cdots+z_mw_m=0$.  Write $f=[f_0,\ldots,f_m]$, where $f_0,\ldots,f_m$ are regular functions on $S$, possibly with a common zero set $Z$ that cannot be eliminated.  Let the divisor $D$ on $S$ be the minimum of the divisors of $f_0,\ldots,f_m$ and consider the short exact sequence
\[ 0 \to \operatorname{Ker}\beta \to \mathscr O_D^{m+1} \overset\beta\to \mathscr O \to 0, \]
on $S$, where $\beta(g_0,\ldots,g_m)=f_0g_0+\cdots+f_mg_m$.  Since $H^1(S,\operatorname{Ker}\beta)=0$, there are $g_0,\ldots,g_m\in\mathscr O_D(S)$ with $f_0g_0+\cdots+f_mg_m=1$.  Let $z$ be a coordinate centred at a point of $Z$ where $D=k\geq 1$.  Near the point,
\[ (z^{-k}f_0)(z^k g_0)+\cdots+(z^{-k}f_m)(z^k g_m) = 1. \]
We conclude that the regular map $g=[g_0,\ldots,g_m]:S\to\P_m$ is as desired.

(b)  We have already noted that a flexible affine manifold need not satisfy aBOP, let alone aAP.  Now we present a flexible affine counterexample to aIP.

By a simple change of coordinates, the affine model $A_1$ of $\P_1$ can be realised as the affine surface
\[ A_1 = \{(x,y,z)\in\C^3:xy=z(1-z) \}. \]
Then the projection $A_1\to\P_1$ takes $(x,y,z)$ to $[x,z]=[1-z,y]$.  A Danielewski surface is a smooth affine surface of the form
\[ D_n = \{(x,y,z)\in\C^3:x^n y=p(z) \}, \]
where $n\geq 1$ and $p$ is a polynomial of degree at least $2$ all of whose zeros are simple.  So $A_1$ is of the form $D_1$.  It is known that Danielewski surfaces with $n=1$ are flexible, whereas for $n\geq 2$ they are locally flexible but not flexible.  Moreover, for the same $p$, the surfaces $D_n\times \C$ are mutually isomorphic for all $n\geq 1$.  Thus $A_1$ is flexible.  It also follows that for affine manifolds, flexibility is not preserved by regular retracts, and local flexibility does not imply flexibility.

By another simple change of coordinates, we can realise $A_1$ as the complex $2$-sphere $\Sigma^2$, showing again that $A_1$ is flexible (see Remark \ref{r:forstneric}).
\end{remark}

\section{Proofs of the theorems}
\label{sec:proofs}

\begin{proof}[Proof of Theorem \ref{t:first}]
As mentioned above, aSEll implies both aHRP \cite[Theorem 3.1]{Forstneric2006} and $\textrm{aEll}_1$ \cite[Proposition 4.6]{Forstneric2006}.  We merely observe that aHRP and $\textrm{aEll}_1$ both easily imply the following weaker property of an algebraic manifold $Y$, which in turn clearly implies aSEll by the powerful localisation principle.

\noindent
\textit{Weak formulation of aOka}:  For every $a\in Y$, the tangent space $T_a Y$ can be spanned by vectors $v$, such that there is a Zariski-open neighbourhood $U$ of $a$ in $Y$ (which might as well be taken to be affine) and a regular map $f:U\times\C\to Y$ with $f(y,0)=y$ for all $y\in U$ and $D_0f(a,\cdot)\dfrac d{dz}=v$.
\end{proof}

Next we turn to Theorem \ref{t:toric}.  Our proof relies on two theorems.
\begin{itemize}
\item  The smooth locus of a nondegenerate affine toric variety is flexible \cite[Theorem 0.2]{AKZ2012}.
\item  The complement of a subvariety of codimension at least $2$ in a flexible quasi-affine manifold is flexible \cite[Theorem 1.1]{FKZ2016}.
\end{itemize}

\begin{proof}[Proof of Theorem \ref{t:toric}]
Let $Y$ be a smooth nondegenerate toric variety.  It is defined by a fan $F$ of cones in a vector space $N_\mathbb R=N\otimes_{\mathbb Z}\mathbb R$, where $N$ is a lattice.  The fan is smooth, meaning that the minimal generators of each cone in $F$ form part of a $\mathbb Z$-basis for $N$.  Nondegeneracy of $Y$ means that the minimal generators of all the cones together span~$N_\mathbb R$.

Each maximal cone in $F$ defines an affine toric Zariski-open subset of $Y$, and these subsets cover $Y$.  If the dimension of the cone is $n = \dim Y$, then the corresponding subset is $\C^n$ (because the cone is smooth), which is flexible.  If the dimension of the cone $C$ is $k<n$, then the corresponding subset $U$ is $\C^k \times (\C^*)^{n-k}$.  We will extend $U$ to a flexible quasi-affine toric Zariski-open subset $V$ of $Y$.  This will complete the proof.

We define $V$ by a subfan $F'$ of $F$.  The subfan $F'$ contains $C$ along with the $1$-dimensional cones spanned by some of the minimal generators of the other cones in $F$, so that these generators, together with the minimal generators of $C$, form a basis for~$N_\mathbb R$.

Let $C'$ be the cone spanned by the cones in $F'$, that is, spanned by the minimal generators of $C$ and the additional minimal generators used to define $F'$.  The cone $C'$ is pointed (that is, strictly convex), $n$-dimensional, and defines a possibly singular nondegenerate affine toric variety $Z$.  All the edges of $C'$ are contained in $F'$, so by the orbit-cone correspondence, $V$ is realised as the complement in $Z$ of a toric subvariety of codimension at least $2$.  By the two theorems, $V$ is flexible.
\end{proof}

Without the two theorems, our proof shows that $Y$ has a Zariski-open cover by two kinds of sets.  The first kind is just $\C^n$, coming from a maximal cone $C$ of full dimension.  A set $V$ of the second kind is the complement of a subvariety of codimension at least $2$ in the possibly singular nondegenerate affine toric variety $Z$.  By \cite[Exercise 1.2.10 and Example 1.3.20]{CLS2011}, $Z$ is simplicial, so it is of the form $\C^n/G$, where $G$ is a finite abelian group.  Therefore $V$ is a finite unbranched Galois quotient of the complement of a subvariety of codimension at least $2$ in $\C^n$.  If we could prove directly that such a set $V$ was flexible or just aOka, then we would not have to invoke the two theorems.

Now we turn to the proof of Theorem \ref{t:second}.  We first consider the case of projective varieties.  The proof relies on the failure of the GAGA principle for line bundles on affine manifolds.  We show that a projective variety fails to satisfy aAP, aIP, and aBOP for some very particular sources.

\begin{proof}[Proof of Theorem \ref{t:second} for projective varieties]
Let $Y$ be a projective variety.  First, note that if $Y$ satisfies aIP, then we can use the inclusion $\{0,1\}\hookrightarrow\C$ to obtain a nonconstant regular map $\P_1\to Y$.  Second, suppose that $Y$ satisfies aBOP.  Since $Y$ is not contractible, $\pi_k(Y)\neq 0$ for some $k\geq 1$, so there is a continuous map $\Sigma^k\to Y$ that is not homotopic to a constant map.  (Here, $\Sigma^k$ is the complex $k$-sphere defined in Remark \ref{r:forstneric} above; it contains and retracts onto the real $k$-sphere.)  Then, by aBOP, there is a nonconstant regular map $\Sigma^k\to Y$.  One-parameter subgroups of the linear algebraic group $\textrm{SO}(k+1, \C)$ give many regular maps $\C^*\to\Sigma^k$, so again there is a nonconstant regular map $\P_1\to Y$.

We conclude that if $Y$ does not contain a rational curve, then $Y$ fails to satisfy aIP and fails to satisfy aBOP, and hence fails to satisfy aAP.  We continue the proof assuming that there is a nonconstant regular map $g:\P_1\to Y$.

Let $S$ be a smooth irrational curve in $\C^2$.  It is well known that the algebraic Picard group of $S$ is \lq\lq large\rq\rq, even though its holomorphic Picard group is trivial.  The algebraic Picard group has plenty of nontorsion elements, so there is an algebraic line bundle $L$ on $S$ so that no nonzero tensor power of $L$ is algebraically trivial.\footnote{We have $S=M\setminus F$, where $M$ is a smooth projective curve of genus at least $1$ and $F\subset M$ is finite and nonempty.  Every algebraic line bundle on $S$ extends to $M$, so the algebraic Picard group of $S$ is a quotient of the Picard group of $M$ by the finitely-generated subgroup corresponding to divisors with support in $F$.}  It is generated by two regular sections, so it is the pullback of the universal bundle on $\P_1$ by a regular map $f:S\to\P_1$.  Let $P$ be an ample line bundle on $Y$.  Then $g^*P$ is ample on $\P_1$, so $f^*g^*P$ is a nonzero tensor power of $L$ and hence algebraically nontrivial.  Therefore, by the Quillen-Suslin theorem, $g\circ f:S\to Y$ does not factor regularly through $\C^2$, even though it does continuously because $f$ is nullhomotopic.  This shows that $Y$ does not satisfy aIP.

To show that $Y$ does not satisfy aBOP, we use the fact that $\C^*\times\C^*$ has a \lq\lq large\rq\rq\ holomorphic Picard group, isomorphic to $H^2(\C^*\times\C^*, \mathbb Z)$, even though its algebraic Picard group is trivial.  Let $L$ be a nontrivial holomorphic line bundle on $\C^*\times\C^*$.  Then no nonzero tensor power of $L$ is holomorphically trivial.  As we will explain in a moment, $L$ is generated by two holomorphic sections, so it is the pullback of the universal bundle on $\P_1$ by a holomorphic map $f:\C^*\times\C^*\to\P_1$.  As before, let $P$ be an ample line bundle on $Y$.  Then $g^*P$ is ample on $\P_1$, so $f^*g^*P$ is a nonzero tensor power of $L$ and hence holomorphically nontrivial.  If $g\circ f:\C^*\times\C^*\to Y$ could be deformed to a regular map $h$, then $h^*P$ would be algebraically and hence topologically trivial, so $f^*g^*P$ would be topologically trivial as well, and hence holomorphically trivial by Grauert's Oka principle.  This shows that $Y$ fails to satisfy aBOP and hence aAP.

To generate $L$ by two holomorphic sections on $\C^*\times\C^*$, we first choose a nontrivial section $s$, whose zero locus is a $1$-dimensional subvariety $Z$ of $\C^*\times\C^*$.  Now $Z$ has the homotopy type of a union of bouquets of circles (see \cite{HM1997}), so $L\vert_Z$ is topologically and hence holomorphically trivial.  Take a holomorphic section of $L\vert_Z$ without zeros and extend it to a holomorphic section $t$ of $L$.  Then $s$ and $t$ generate $L$.
\end{proof}

To prove Theorem \ref{t:second} in full generality we need a lemma, probably well known to experts, but for want of a reference we sketch a proof.

\begin{lemma}    \label{l:lemma-1}
Let $Y$ be a compact algebraic manifold and $C$ be an irreducible curve in $Y$.  There is a finite composition $Y'\to Y$ of blow-ups with smooth centres, such that $Y'$ is projective and $C$ is not contained in the image of any of the exceptional divisors.
\end{lemma}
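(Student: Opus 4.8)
The plan is to reduce the statement to a single clean requirement and then secure it with Hironaka-style resolution together with Moishezon's projectivisation theorem.

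First I would reformulate what must be avoided. For a composition $\pi\colon Y'=Y_N\to\cdots\to Y_0=Y$ of blow-ups with smooth centres, the image in $Y$ of every exceptional divisor is a proper closed subvariety, and it is contained in the locus over which $\pi$ fails to be an isomorphism. Since $Y$ is a connected smooth variety, it is irreducible, so it suffices to make $\pi$ an isomorphism over some Zariski-open set $W$ meeting $C$: then $C$ is not contained in $Y\setminus W$, hence, as $C=\overline{\{\eta\}}$ for its generic point $\eta\in W$, it is not contained in the image of any exceptional divisor. Accordingly I fix an affine open $W\subseteq Y$ meeting $C$; being affine it is quasi-projective, and being nonempty it contains $\eta$ and is dense.

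Next I would produce a smooth projective model that agrees with $Y$ over $W$. By Chow's lemma, applied so as to be an isomorphism over the prescribed dense quasi-projective open set $W$, there is a projective birational morphism $\bar Y\to Y$ with $\bar Y$ projective that restricts to an isomorphism over $W$. Resolving the singularities of $\bar Y$ by Hironaka with centres lying over $\operatorname{Sing}\bar Y$, which maps into $Y\setminus W$, I obtain a smooth projective variety $Y''$ and a projective birational morphism $f\colon Y''\to Y$ that is still an isomorphism over $W$. I would then resolve the indeterminacies of the birational map $Y\dashrightarrow Y''$ (an isomorphism over $W$) by Hironaka, blowing up $Y$ along smooth centres all contained in $Y\setminus W$, to get a composition $\pi\colon Y'\to Y$ of blow-ups with smooth centres, an isomorphism over $W$, together with a birational morphism $q\colon Y'\to Y''$.

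The substantive point, and the step I expect to be the main obstacle, is the projectivity of $Y'$: a birational morphism to the projective variety $Y''$ does not by itself force $Y'$ to be projective (Hironaka's classical example shows that proper modifications of a projective threefold need not be projective), and since the base $Y$ is only complete I cannot manufacture an ample bundle on $Y'$ merely by combining a relatively ample class for $\pi$ with the pullback of an ample bundle from $Y''$. What saves the argument is that the projectivising modification may be taken to be a tower of smooth blow-ups over the \emph{projective} side as well: by Hironaka's common resolution of a birational map between smooth varieties (a precursor of weak factorisation), I may arrange $Y'$ to be reached from $Y''$ by a composition of blow-ups with smooth centres, all contained in the exceptional locus of $f$, so that $Y'$ is projective because $Y''$ is and these centres are disjoint from the preimage of $W$. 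Equivalently, one may invoke Moishezon's theorem, which directly furnishes a projectivising tower of smooth blow-ups of the Moishezon manifold $Y$, in its relative form in which the blow-ups are supported over the locus obstructing projectivity and hence can be kept away from the quasi-projective open $W$. Either way $\pi\colon Y'\to Y$ is a composition of blow-ups with smooth centres, $Y'$ is projective, and $\pi$ is an isomorphism over $W\ni\eta$, which by the first paragraph is exactly what is required.
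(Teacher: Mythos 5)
Your reduction (it suffices to make $\pi$ an isomorphism over an open set $W$ meeting $C$) and your first steps (Chow's lemma arranged to be an isomorphism over $W$, then Hironaka) follow the same general strategy as the paper, and you correctly locate the crux: a proper birational morphism $q\colon Y'\to Y''$ onto a projective variety does not make $Y'$ projective. But neither of your proposed fixes is valid. The first --- ``Hironaka's common resolution'', i.e.\ a single $Y'$ that is \emph{simultaneously} a tower of smooth blow-ups of $Y$ and a tower of smooth blow-ups of $Y''$ --- is not a theorem of Hironaka: it is the strong factorisation conjecture, which remains open. What is actually available is weak factorisation (a zigzag of blow-ups and blow-downs, not a common roof) and Hironaka's elimination of indeterminacy, which makes only \emph{one} of the two projections a tower of smooth blow-ups; the other is merely a proper birational morphism, which is exactly where your projectivity argument breaks. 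The second fix, ``Moishezon's theorem in its relative form with blow-ups supported away from $W$'', is circular: that relative form is essentially the lemma being proved, and it is precisely because no citable reference exists that the paper sketches a proof at all (the authors borrow only the \emph{arguments} from Moishezon's proof of Proposition D, not a ready-made statement). The absolute form of Moishezon's theorem does not suffice, since its projectivising centres could well lie over $C$.

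The paper closes the gap you identified by a mechanism your proposal is missing. Raynaud's strong version of Chow's lemma gives a \emph{single} blow-up $p\colon \tilde Y\to Y$ along an ideal sheaf $I$ cosupported on $Y\setminus U$, with $\tilde Y$ projective. Hironaka's principalisation then produces a tower of smooth blow-ups $\pi\colon Y'\to Y$ with centres over $Y\setminus U$ such that $\pi^*I$ is invertible; by the universal property of blowing up, $\pi$ factors as $\pi=p\circ g$, and by Hironaka--Rossi the morphism $g\colon Y'\to\tilde Y$ is itself a blow-up, hence a \emph{projective} morphism. Therefore $Y'$ is projective over the projective variety $\tilde Y$, hence projective. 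It is the principalisation step that forces the comparison map to the projective model to be projective --- precisely the leverage your appeal to a common resolution was meant to supply, and without which the argument does not go through.
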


\begin{proof}
Let $U$ be an affine Zariski-open subset of $Y$ with $U\cap C\neq\varnothing$.  By a strong version of Chow's lemma \cite[Proposition 5]{Raynaud1972}, there is a blow-up $p:\tilde Y\to Y$ along an ideal $I$ cosupported on $Y\setminus U$, such that $\tilde Y$ is smooth and projective.  We can then use the arguments in \cite[proof of Proposition D]{Moishezon1967} to finish the proof as follows.  By Hironaka's resolution of singularities, there is an iterated blow-up $\pi:Y'\to Y$ with smooth centres over $Y\setminus U$, such that the ideal $\pi^*I$ is principal.  Then $Y'$ is smooth and compact and by the universal property of blow-ups, there is a birational morphism $g:Y'\to \tilde Y$ such that $\pi=p\circ g$.  Since $p$ is a blow-up and hence a modification, $g$ is also a blow-up by \cite[Lemma 4]{HR1964}.  Since $\tilde Y$ is projective, so is $Y'$.  The exceptional divisors of $\pi$ lie over $Y\setminus U$, so their images do not contain $C$.
\end{proof}

\begin{proof}[Proof of Theorem \ref{t:second}]
(a)  Let $Y$ be an algebraic manifold with a nonconstant regular map $f:\P_1\to Y$.
 By Nagata's compactification theorem and Hironaka's resolution of singularities, there is a smooth compactification $\bar Y$ of $Y$.  By Lemma \ref{l:lemma-1}, there is a finite composition $\pi:Y'\to \bar Y$ of blow-ups with smooth centres, such that $Y'$ is projective and $f(\P_1)$ is not contained in the image of any of the exceptional divisors.  
 
Embed $Y'$ in a projective space $\P_m$.  Let $L$ be the hyperplane bundle on $\P_m$ and take a hyperplane $H$ in $\P_m$ that does not contain $\pi^{-1}(f(\P_1))$.  Then $\deg f^*\pi_*(L\vert_{Y'})$ equals the intersection number $f_*(\P_1)\cdot \pi_*(L\vert_{Y'})$, that is, the intersection number of the effective curve $f_*(\P_1)$ and the effective divisor $\pi_*(H\cap Y')$.  This number is positive since $f(\P_1)$ is irreducible and not contained in $\pi(H\cap Y')$, so $f^*\pi_*(L\vert_{Y'})$ is an ample line bundle on $\P_1$.  Now we can proceed as in the proof for a projective variety $Y$.
 
(b)  If $Y$ is a compact algebraic manifold satifying aAP, aIP, or aBOP, then, as in the proof for projective varieties, we can show that $Y$ has a rational curve.  Then we invoke part (a).
\end{proof}

\end{document}